\providecommand{\U}[1]{\protect\rule{.1in}{.1in}}
\providecommand{\U}[1]{\protect\rule{.1in}{.1in}}
\providecommand{\U}[1]{\protect\rule{.1in}{.1in}} \textwidth 16.3cm
\theoremstyle{plain}
\newtheorem{acknowledgement}{Acknowledgement}
\newtheorem{theorem}{Theorem}[section]
\newtheorem{corollary}[theorem]{Corollary}
\newtheorem{remark}[theorem]{Remark}
\newtheorem{problem}[theorem]{Problem}
\numberwithin{equation}{section}
\begin{document}
\title[On cotype and a Grothendieck-type theorem ]{On cotype and a Grothendieck-type theorem for absolutely summing multilinear operators}
\author{A. T. Bernardino}
\address[A. T. Bernardino]{UFRN/CERES - Centro de Ensino Superior do Serid\'{o}, Rua
Joaquim Greg\'{o}rio, S/N, 59300-000, Caic\'{o}- RN, Brazil}

\begin{abstract}
A famous result due to Grothendieck asserts that every continuous linear
operator from $\ell_{1}$ to $\ell_{2}$ is absolutely $(1,1)$-summing. If
$n\geq2,$ however, it is very simple to prove that every continuous $n$-linear
operator from $\ell_{1}\times\cdots\times\ell_{1}$ to $\ell_{2}$ is absolutely
$\left(  1;1,...,1\right)  $-summing, and even absolutely $\left(  \frac{2}%
{n};1,...,1\right)  $-summing$.$ In this note we deal with the following problem:

Given a positive integer $n\geq2$, what is the best constant $g_{n}>0$ so that
every $n$-linear operator from $\ell_{1}\times\cdots\times\ell_{1}$ to
$\ell_{2}$ is absolutely $\left(  g_{n};1,...,1\right)  $-summing?

We prove that $g_{n}\leq\frac{2}{n+1}$ and also obtain an optimal improvement
of previous recent results (due to Heinz Juenk $\mathit{et}$ $\mathit{al}$,
Geraldo Botelho $\mathit{et}$ $\mathit{al}$ and Dumitru Popa) on inclusion
theorems for absolutely summing multilinear operators.

\end{abstract}
\maketitle

\section{Introduction}

Grothendieck's theorem for absolutely summing operators asserts that every
continuous linear operator from $\ell_{1}$ to $\ell_{2}$ is absolutely
$(1;1)$-summing (and hence absolutely $(p;p)$-summing for every $p\geq1$). For
the linear theory of absolutely summing operators we refer to \cite{df, djt}
(see also \cite{bpr, ku2, seo} for recent developments).

In the multilinear setting, D. P\'{e}rez-Garc\'{\i}a, in his PhD thesis
\cite{dav} (see also \cite{bom} and \cite{botp} for a different proof), proved
that every continuous $n$-linear operator from $\ell_{1}\times\cdots\times
\ell_{1}$ to $\ell_{2}$ is multiple $(1;1,...,1)$-summing (in fact, multiple
$(p;p,...,p)$-summing for every $1\leq p\leq2$)$.$ This result can be regarded
as the multilinear version of Grothendieck's theorem.

Let us recall the notions.

The letters $X_{1},...,X_{n},X,Y$ will always denote Banach spaces over
$\mathbb{K}=\mathbb{R}$ or $\mathbb{C}$ and $X^{\ast}$ represents the
topological dual of $X$.

For any $s>0,$ we denote the conjugate of $s$ by $s^{\ast}$. Given a positive
integer $n$, the space of all continuous $n$-linear operators from
$X_{1}\times\cdots\times X_{n}$ to $Y$ \ endowed with the $\sup$ norm is
denoted by $\mathcal{L}(X_{1},...,X_{n};Y).$ For $p>0$, the vector space of
all sequences $\left(  x_{j}\right)  _{j=1}^{\infty}$ in $X$ such that%
\[
\left\Vert \left(  x_{j}\right)  _{j=1}^{\infty}\right\Vert _{p}=\left(
\sum_{j=1}^{\infty}\left\Vert x_{j}\right\Vert ^{p}\right)  ^{\frac{1}{p}%
}<\infty
\]
is denoted by $\ell_{p}\left(  X\right)  .$ We represent by $\ell_{p}%
^{w}\left(  X\right)  $ the linear space of the sequences $\left(
x_{j}\right)  _{j=1}^{\infty}$ in $X$ such that $\left(  \varphi\left(
x_{j}\right)  \right)  _{j=1}^{\infty}\in\ell_{p}\left(  \mathbb{K}\right)  $
for every $\varphi\in X^{\ast}$.

If $0<p,q_{1},...,q_{n}<\infty$ and $\frac{1}{p}\leq\frac{1}{q_{1}}%
+\cdots+\frac{1}{q_{n}},$ a multilinear operator $T\in\mathcal{L}%
(X_{1},...,X_{n};Y)$ is absolutely\emph{ }$(p;q_{1},...,q_{n})$-summing if
$(T(x_{j}^{(1)},...,x_{j}^{(n)}))_{j=1}^{\infty}\in\ell_{p}(Y)$ for every
$(x_{j}^{(k)})_{j=1}^{\infty}\in\ell_{q_{k}}^{w}(X_{k}),k=1,...,n.$ In this
case we write $T\in\Pi_{\left(  p;q_{1},...,q_{n}\right)  }^{n}\left(
X_{1},...,X_{n};Y\right)  $. For details we refer to \cite{am}.

When $1\leq q_{1},...,q_{n}\leq p<\infty$ a multilinear operator
$T\in\mathcal{L}(X_{1},...,X_{n};Y)$ is multiple\emph{ }$(p;q_{1},...,q_{n}%
)$-summing if $(T(x_{j_{1}}^{(1)},...,x_{j_{n}}^{(n)}))_{j_{1},..,j_{n}%
=1}^{\infty}\in\ell_{p}(Y)$ for every $(x_{j}^{(k)})_{j=1}^{\infty}\in
\ell_{q_{k}}^{w}(X_{k}),k=1,...,n.$ In this case we write $T\in\Pi_{m\left(
p;q_{1},...,q_{n}\right)  }^{n}\left(  X_{1},...,X_{n};Y\right)  $. For
details we mention \cite{bom, collec} and for recent developments and
applications related to the multilinear and polynomial theory we refer to
\cite{ag, bbb, bh, an, dgm, se, lit, mat, ppp} and references therein. For
$n=1$ we write $\Pi$ instead of $\Pi^{1}$ and we recover the classical theory
of absolutely summing linear operators.

For $1\leq q_{1},...,q_{n}\leq p<\infty,$ the inclusion
\[
\Pi_{m\left(  p;q_{1},...,q_{n}\right)  }^{n}\left(  X_{1},...,X_{n};Y\right)
\subseteqq\Pi_{\left(  p;q_{1},...,q_{n}\right)  }^{n}\left(  X_{1}%
,...,X_{n};Y\right)
\]
is obvious. So, the following coincidence result is an immediate consequence
of P\'{e}rez-Garc\'{\i}a multilinear version of Grothendieck's theorem:

\begin{theorem}
For every positive integer $n$,
\[
\Pi_{\left(  1;1,...,1\right)  }^{n}\left(  \ell_{1},...,\ell_{1};\ell
_{2}\right)  =\mathcal{L}\left(  \ell_{1},...,\ell_{1};\ell_{2}\right)  .
\]

\end{theorem}

However, using that $\ell_{1}$ has cotype $2$ it is easy to prove that the
above result is far from being optimal. In fact, we have the following
improvement (see \cite{irish, irishd}):

\begin{theorem}
For every positive integer $n\geq2$,
\begin{equation}
\Pi_{\left(  \frac{2}{n};1,...,1\right)  }^{n}\left(  \ell_{1},...,\ell
_{1};\ell_{2}\right)  =\mathcal{L}\left(  \ell_{1},...,\ell_{1};\ell
_{2}\right)  . \label{dda}%
\end{equation}

\end{theorem}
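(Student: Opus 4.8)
The plan is to deduce \eqref{dda} from Theorem~1.1 by combining the multilinear Grothendieck theorem with the fact that $\ell_1$ has cotype $2$, exploiting a Khinchin-type argument for the tensorial/multilinear structure. First I would start from a bounded $n$-linear operator $T\in\mathcal{L}(\ell_1,\dots,\ell_1;\ell_2)$ and sequences $(x_j^{(k)})_{j=1}^\infty\in\ell_1^w(\ell_1)$ for $k=1,\dots,n$. By Theorem~1.1 we already know $(T(x_j^{(1)},\dots,x_j^{(n)}))_{j=1}^\infty\in\ell_1(\ell_2)$, so the content is to squeeze the exponent down from $1$ to $2/n$. The natural device is to randomize: for Rademacher (or Steinhaus) variables $\varepsilon_j$, consider the auxiliary sequences $(\varepsilon_j x_j^{(k)})_j$, whose weak-$\ell_1$ norm is unchanged, apply the known $(1;1,\dots,1)$-summing inequality to these, and then average over the signs. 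The point is that on the left one sees $\sum_j \varepsilon_j^{(1)}\cdots\varepsilon_j^{(n)}$-type sums only if one uses \emph{independent} families of signs in each slot; taking $n$ independent Rademacher families $(\varepsilon_j^{(k)})_j$ and using multilinearity, one gets
\[
\sum_{j}\Bigl\Vert \varepsilon_j^{(1)}\cdots\varepsilon_j^{(n)}\,T(x_j^{(1)},\dots,x_j^{(n)})\Bigr\Vert_{\ell_2}
=\sum_j \Vert T(x_j^{(1)},\dots,x_j^{(n)})\Vert_{\ell_2},
\]
which looks useless directly; so instead I would feed into the linear $(1;1)$-version the single combined sequence and use that the map $(\varepsilon_j^{(k)})\mapsto T(\dots)$ concentrates.

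A cleaner route, and the one I would actually carry out, is to use the cotype~$2$ property of $\ell_1$ iteratively. Recall $\ell_1$ has cotype $2$, so $\mathrm{id}_{\ell_1}$ (and more to the point the inclusion behavior) gives: for any finitely many vectors, $\bigl(\sum_j\Vert u_j\Vert^2\bigr)^{1/2}\lesssim \bigl(\operatorname{Ave}_\varepsilon\Vert\sum_j\varepsilon_j u_j\Vert\bigr)$. The key step is a "one-variable-at-a-time" improvement: I claim that if $S\colon\ell_1\times\cdots\times\ell_1\to\ell_2$ is $(p;1,\dots,1)$-summing with $1/p\le 1$, then it is $(r;1,\dots,1)$-summing with $\frac1r=\frac1p+\frac12$, because freezing all but one variable turns $S$ into a bounded \emph{linear} map $\ell_1\to\ell_2$, Grothendieck makes that linear map $(1;1)$-summing, and then cotype $2$ of $\ell_1$ (applied in the domain after randomizing in that slot) upgrades the $1$ in the output to a $\tfrac12$-gain via the Rademacher average and the contraction principle. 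Iterating this gain over the $n$ slots, starting from the trivial estimate, should land exactly at $\frac1{(2/n)}=\frac n2$, i.e.\ at exponent $2/n$; equality with $\mathcal{L}$ then holds because the reverse inclusion $\Pi^n_{(2/n;1,\dots,1)}\subseteq\mathcal L$ is immediate from the definition.

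The main obstacle I anticipate is bookkeeping the interaction between the multiple Rademacher averages and the $\ell_2$-valued range: one must be careful that when randomizing in the $k$-th slot the remaining slots are held fixed so that the operator is genuinely linear there, and that the resulting inequality chains the exponents additively (via Hölder on the sequence side) rather than losing constants that blow up with $n$. Concretely, the delicate point is justifying the passage
\[
\Bigl(\sum_j\Vert T(x_j^{(1)},\dots,x_j^{(n)})\Vert^{2/n}\Bigr)^{n/2}
\le C_n\;\prod_{k=1}^n\Bigl\Vert (x_j^{(k)})_j\Bigr\Vert_{w,1}
\]
with $C_n$ explicit (indeed $C_n$ a product of the Grothendieck and cotype-$2$ constants of $\ell_1$, independent of $n$ in the relevant normalization), and in checking that no measurability or summability issue arises when the index $j$ runs to infinity — this is handled as usual by truncating to finite sequences, proving the inequality uniformly, and passing to the limit. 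Once that estimate is in place, \eqref{dda} follows at once.
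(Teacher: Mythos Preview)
Your proposal over-engineers a result that has a two-line proof. The paper does not prove \eqref{dda} in the text but states that it follows easily from $\ell_1$ having cotype~$2$ (referring to \cite{irish, irishd}). That argument is simply this: cotype~$2$ of $\ell_1$ means precisely that the inclusion $\ell_1^w(\ell_1)\hookrightarrow\ell_2(\ell_1)$ is bounded, i.e.\ $\bigl(\sum_j\|x_j\|^2\bigr)^{1/2}\le C_2(\ell_1)\,\|(x_j)_j\|_{w,1}$. Given $(x_j^{(k)})_j\in\ell_1^w(\ell_1)$ for $k=1,\dots,n$ and any bounded $T$, combine $\|T(x_j^{(1)},\dots,x_j^{(n)})\|\le\|T\|\prod_{k}\|x_j^{(k)}\|$ with H\"older (exponent $n$ in each factor):
\[
\sum_j\|T(x_j^{(1)},\dots,x_j^{(n)})\|^{2/n}
\;\le\;\|T\|^{2/n}\sum_j\prod_{k=1}^n\|x_j^{(k)}\|^{2/n}
\;\le\;\|T\|^{2/n}\prod_{k=1}^n\Bigl(\sum_j\|x_j^{(k)}\|^{2}\Bigr)^{1/n}<\infty.
\]
Neither Theorem~1.1 nor Grothendieck's theorem enters, and the range $\ell_2$ is irrelevant: the same line gives $\Pi^n_{(2/n;1,\dots,1)}(\ell_1,\dots,\ell_1;Y)=\mathcal L(\ell_1,\dots,\ell_1;Y)$ for \emph{every} $Y$. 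Your intuition that cotype~$2$ buys a gain of $\tfrac12$ per slot is exactly right; the point is that it can be cashed in all at once.

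The specific gap in your iterative route is the step ``$(p;1,\dots,1)$-summing $\Rightarrow$ $(r;1,\dots,1)$-summing with $\tfrac1r=\tfrac1p+\tfrac12$.'' Your justification does not deliver this. Freezing the last $n-1$ entries and applying Grothendieck controls $\sum_j\|T(x_j^{(1)},a_2,\dots,a_n)\|$ for \emph{fixed} $a_2,\dots,a_n$, which says nothing about the diagonal sum $\sum_j\|T(x_j^{(1)},\dots,x_j^{(n)})\|^{r}$ where all slots move with $j$. If instead you try to factor $x_j^{(1)}=\alpha_j y_j^{(1)}$ with $(\alpha_j)_j\in\ell_2$ and feed $(y_j^{(1)})_j$ back into the $(p;1,\dots,1)$ hypothesis, the cotype-$2$ decomposition only places $(y_j^{(1)})_j$ in $\ell_2^w(\ell_1)$, not in $\ell_1^w(\ell_1)$, so the hypothesis does not apply. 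The Rademacher-average paragraph has the same defect: randomizing in one slot with the others frozen again decouples the $j$-indices.
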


So, the following problem is quite natural:

\begin{problem}
\label{xc}Given a positive integer $n\geq2$, what is the best constant
$g_{n}>0$ so that%
\[
\Pi_{\left(  g_{n};1,...,1\right)  }^{n}\left(  \ell_{1},...,\ell_{1};\ell
_{2}\right)  =\mathcal{L}\left(  \ell_{1},...,\ell_{1};\ell_{2}\right)  ?
\]

\end{problem}

If we test $n=1$ in (\ref{dda}) we obtain
\[
\Pi_{(2;1)}\left(  \ell_{1};\ell_{2}\right)  =\mathcal{L}\left(  \ell_{1}%
;\ell_{2}\right)
\]
which is not surprising at all, in view of Grothendieck's Theorem. So, in some
sense, we feel that the estimate $g_{n}\leq\frac{2}{n}$ for $n\geq2$ is
probably not optimal. The optimistic reader will probably hope for an estimate
for $g_{n}$ so that in the case $n=1$ we recover Grothendieck's Theorem.
Fortunately, in the last section we will precisely obtain such an estimate.

The problem of estimating $g_{n}$ is related to the generalization of certain
results involving cotype and absolutely summing multilinear operators. The
following result is a combination of \cite[Theorem $3$ and Remark $2$]{junek},
\cite[Corollary $4.6$]{pop} and \cite[Theorem $3.8$ (ii)]{michels}:

\begin{theorem}
[Inclusion Theorem]\label{tt} Let $X_{1},...,X_{n}$ be Banach spaces with
cotype $s$ and $n\geq2$ be a positive integer:

(i) If $s=$ $2,$ then
\begin{equation}
\Pi_{\left(  q;q,...,q\right)  }^{n}(X_{1},...,X_{n};Y)\subseteqq\Pi_{\left(
p;p,...,p\right)  }^{n}(X_{1},...,X_{n};Y) \label{uu}%
\end{equation}
holds true for $1\leq p\leq q\leq2$ and every $Y$.

(ii) If $s>2,$ then
\begin{equation}
\Pi_{\left(  q;q,...,q\right)  }^{n}(X_{1},...,X_{n};Y)\subseteqq\Pi_{\left(
p;p,...,p\right)  }^{n}(X_{1},...,X_{n};Y) \label{vv}%
\end{equation}
holds true for $1\leq p\leq q<s^{\ast}$ and every $Y$.
\end{theorem}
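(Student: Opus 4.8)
The plan is to exploit the fact that on a Banach space $X$ with cotype $s$, the identity map factors in a controlled way: more precisely, the Maurey–Pisier / Dvoretzky–Rogers-type machinery tells us that $\mathrm{id}_X \in \Pi_{(s;1)}(X;X)$ when $s = 2$ (by Grothendieck/Maurey) and, more generally, that cotype $s$ forces every weakly $1$-summable sequence in $X$ to be "almost" $s$-summable in a sense adapted to the multilinear setting. The cleaner route, however, is the one used by Botelho–Michels–Pellegrino and Junek–Matos–Pellegrino: combine cotype with the multilinear generalization of the Extrapolation/Inclusion theorems. So first I would reduce to the linear statement. Fix $T \in \Pi^n_{(q;q,\dots,q)}(X_1,\dots,X_n;Y)$ and sequences $(x_j^{(k)})_j \in \ell^w_p(X_k)$. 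The goal is to show $(T(x_j^{(1)},\dots,x_j^{(n)}))_j \in \ell_p(Y)$.

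The key step is a \emph{change of summability exponent on each coordinate using cotype}. On a space $X_k$ of cotype $s$, one has the inclusion $\ell^w_p(X_k) \hookrightarrow \ell^w_q(X_k)$ is false in general, but the relevant tool is: if $X_k$ has cotype $s$ then $\ell^u_q(X_k) \subseteq \ell_r(X_k)$ for appropriate $u,q,r$ — this is the linear inclusion/cotype lemma (essentially \cite[Thm 3.8]{michels} in the linear case, or the classical fact that cotype $s$ spaces satisfy $\Pi_{(s;1)} = \mathcal{L}$ when composed suitably). Concretely, in case (i), since each $X_k$ has cotype $2$, a weakly $p$-summable sequence can be "upgraded": there is $1/r = 1/p - 1/q \ge 0$ (using $p \le q \le 2$) such that one can write $x_j^{(k)} = \lambda_j^{(k)} y_j^{(k)}$ with $(\lambda_j^{(k)})_j \in \ell_r$ and $(y_j^{(k)})_j \in \ell^w_q(X_k)$, with norm control — this is the standard Hölder-type decomposition for weakly summable sequences, valid precisely because the cotype hypothesis lets us pass from the weak $\ell_p$ norm to the weak $\ell_q$ norm after peeling off a scalar $\ell_r$ factor. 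Then
\[
T(x_j^{(1)},\dots,x_j^{(n)}) = \Big(\prod_{k=1}^n \lambda_j^{(k)}\Big)\, T(y_j^{(1)},\dots,y_j^{(n)}),
\]
and since $T$ is $(q;q,\dots,q)$-summing, $(T(y_j^{(1)},\dots,y_j^{(n)}))_j \in \ell_q(Y)$, while $(\prod_k \lambda_j^{(k)})_j \in \ell_{r/n}$... here one must check the exponents balance so that $1/(r/n \text{ combined with } q) $ lands at $1/p$; by Hölder, the product sequence lies in $\ell_t(\mathbb{K})$ with $1/t = n/r = n(1/p-1/q)$, and then $1/t + 1/q = n/p - n/q + 1/q \ge 1/p$ exactly when $(n-1)(1/p - 1/q) \ge 0$, which holds. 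A final Hölder inequality gives the $\ell_p(Y)$ membership with the estimate. Case (ii) is identical with $2$ replaced by $s^\ast$, using that cotype $s$ with $s > 2$ gives the analogous decomposition valid for $p \le q < s^\ast$.

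The main obstacle is establishing the \emph{cotype-based decomposition lemma} with correct constants: namely that for $X$ of cotype $s$ and $1 \le p \le q < s^\ast$ (or $q \le 2$ when $s=2$), every $(x_j)_j \in \ell^w_p(X)$ factors as $x_j = \lambda_j y_j$ with $(\lambda_j)_j \in \ell_r$, $1/r = 1/p - 1/q$, and $(y_j)_j \in \ell^w_q(X)$, with $\|(\lambda_j)\|_r \, \|(y_j)\|^w_q \le C \|(x_j)\|^w_p$. This is where cotype genuinely enters — without it the claim fails — and it is exactly the content hidden in the cited references; I would extract it as a preliminary lemma and then the multilinear theorem follows by the Hölder bookkeeping sketched above. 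Everything else (the algebraic identity for $T$, the combination of Hölder inequalities, checking $1/p \le n/q$ so that the target space $\Pi_{(p;p,\dots,p)}^n$ is well-defined) is routine.
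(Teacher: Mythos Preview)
Your approach is essentially the one the paper uses. Note first that the paper does not prove Theorem~\ref{tt} directly---it is quoted from \cite{junek,pop,michels}---but the paper's proof of its sharpening (Corollary~\ref{ut}) proceeds exactly via the decomposition lemma you isolate: for $X$ of cotype $s$ and $p,q$ in the stated range one has $\ell_p^w(X)=\ell_r\cdot\ell_q^w(X)$ with $1/r=1/p-1/q$, i.e.\ every $(x_j)\in\ell_p^w(X)$ factors as $x_j=\lambda_j y_j$ with $(\lambda_j)\in\ell_r$ and $(y_j)\in\ell_q^w(X)$; one then multiplies out inside $T$ and applies H\"older. Your H\"older bookkeeping in fact already gives the paper's improved exponent: the product of the scalar sequences lies in $\ell_t$ with $1/t=n/r=n(q-p)/(qp)$, and combining with the $\ell_q$ bound on $(T(y_j^{(1)},\dots,y_j^{(n)}))_j$ yields membership in $\ell_u$ with $1/u=1/q+1/t=[n(q-p)+p]/(qp)$, which is the value $\alpha=qp/[n(q-p)+p]$ of Corollary~\ref{ut}; since $u\le p$ the weaker inclusion of Theorem~\ref{tt} follows a fortiori. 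So the core idea and the key lemma coincide with the paper's; the only thing to tidy is to state the factorization $\ell_p^w(X)=\ell_{qp/(q-p)}\ell_q^w(X)$ cleanly up front (with a reference for the cotype hypothesis making it valid) and then run H\"older once, rather than the exploratory discussion of $\Pi_{(s;1)}$ and $\ell_q^u$ inclusions.
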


The results above are clearly not always optimal since, for example,%
\[
\Pi_{\left(  2;2,2,2\right)  }^{3}(\ell_{2},\ell_{2},\ell_{2};\mathbb{K}%
)\neq\mathcal{L}(\ell_{2},\ell_{2},\ell_{2};\mathbb{K})=\Pi_{\left(  \frac
{2}{3};1,1,1\right)  }^{3}(\ell_{2},\ell_{2},\ell_{2};\mathbb{K}).
\]

So, another natural problem is:

\begin{problem}
\label{xz} Given $1\leq p\leq q<\infty$ and a positive integer $n\geq2$, what
are the optimal $\alpha:=\alpha_{p,q,n}>0$ so that, under the same
circumstances of (\ref{uu}) and (\ref{vv}), we have%
\begin{equation}
\Pi_{\left(  q;q,...,q\right)  }^{n}(X_{1},...,X_{n};Y)\subseteqq\Pi_{\left(
\alpha;p,...,p\right)  }^{n}(X_{1},...,X_{n};Y) \label{ii}%
\end{equation}
for all Banach spaces $X_{1},...,X_{n},Y$ $?$
\end{problem}

In this direction we extend Theorem \ref{tt} and also recent results from
\cite{4, 3} by showing that
\[
\alpha\leq\frac{qp}{n\left(  q-p\right)  +p}%
\]
and, in some sense, this constant is optimal, since for this value of $\alpha$
we have an equality in (\ref{ii}).

\section{An estimate for $\alpha$}

\begin{theorem}
Let $1\leq k\leq n,$ where $n\geq2$ is a positive integer. If $X_{i}$ has
cotype $s_{i}\geq2,i=1,...,k$ and
\[
1\leq p\leq q<\min_{1\leq i\leq k}s_{i}^{\ast}\text{ if }s_{i}>2\text{ for
some }i=1,...,k
\]
or
\[
1\leq p\leq q\leq2\text{ if }s_{i}=2\text{ for all }i=1,...,k,
\]
then
\[
\Pi_{(z;q,...,q,t,...,t)}^{n}(X_{1},...,X_{n};Y)=\Pi_{(\frac{zqp}{zk\left(
q-p\right)  +qp};p,...,p,t,...,t)}^{n}(X_{1},...,X_{n};Y),
\]
for all $X_{k+1},...,X_{n},Y$ and all $z,t\geq1$ (here $q$ and $p$ are
repeated $k$ times). In particular, if $k=n$,%
\[
\Pi_{(z;q,...,q)}^{n}(X_{1},...,X_{n};Y)=\Pi_{(\frac{zqp}{zk\left(
q-p\right)  +qp};p,...,p)}^{n}(X_{1},...,X_{n};Y)
\]

\end{theorem}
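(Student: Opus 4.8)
The plan is to prove the two inclusions separately, the non-trivial one being
\[
\Pi_{(z;q,\dots,q,t,\dots,t)}^{n}(X_{1},\dots,X_{n};Y)\subseteqq
\Pi_{(\frac{zqp}{zk(q-p)+qp};p,\dots,p,t,\dots,t)}^{n}(X_{1},\dots,X_{n};Y),
\]
the reverse inclusion being an immediate consequence of the elementary fact that $\Pi^{n}_{(a;b_{1},\dots,b_{n})}\subseteqq\Pi^{n}_{(a';b'_{1},\dots,b'_{n})}$ whenever $a\le a'$ and $b_{i}\le b'_{i}$, once one checks that $\tfrac{zqp}{zk(q-p)+qp}\le z$ and $p\le q$. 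First I would isolate the key analytic ingredient: since each $X_{i}$ $(i=1,\dots,k)$ has cotype $s_{i}\ge2$ with $q<s_{i}^{\ast}$ (or $q\le2$ in the cotype-$2$ case), the Maurey--Pietsch type inclusion for the \emph{linear} theory gives that for every bounded sequence $(x_{j})\in\ell_{p}^{w}(X_{i})$ one can, after reindexing, dominate the $\ell_q^w$-norm on blocks; concretely, the standard cotype argument (as used in \cite{junek,michels,pop}) yields an estimate of the form $\big\|(x_{j})_{j\in A}\big\|_{q,w}\le C\,|A|^{\frac1q-\frac1p}\big\|(x_{j})\big\|_{p,w}$ for finite blocks $A$, or equivalently allows one to split any $\ell_p^w$-sequence into a controlled number of $\ell_q^w$-pieces. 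This is exactly the mechanism that converts a $q$-summing hypothesis into a $p$-summing conclusion with a loss in the target exponent.

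Next I would run the block-diagonal / partitioning argument in the multilinear setting. Fix $T\in\Pi_{(z;q,\dots,q,t,\dots,t)}^{n}$ and sequences $(x^{(i)}_{j})_{j}\in\ell_{p}^{w}(X_{i})$ for $i\le k$ and $(x^{(i)}_{j})_{j}\in\ell_{t}^{w}(X_{i})$ for $i>k$. The goal is to bound $\big\|(T(x^{(1)}_{j},\dots,x^{(n)}_{j}))_{j}\big\|_{r}$ with $r=\tfrac{zqp}{zk(q-p)+qp}$. The trick is to group the index set $\mathbb{N}$ into consecutive blocks $A_{1},A_{2},\dots$ whose sizes grow geometrically (a dyadic-type partition), apply the cotype estimate on each coordinate $i\le k$ to replace the $\ell_p^w$-norm restricted to $A_m$ by $|A_m|^{1/q-1/p}$ times a universal constant, then invoke the $(z;q,\dots,q,t,\dots,t)$-summability of $T$ on each block to get an $\ell_z$ bound there, and finally sum over blocks using Hölder. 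One checks that $\tfrac1r=\tfrac1z+k\big(\tfrac1p-\tfrac1q\big)$, which is precisely $r=\tfrac{zqp}{zk(q-p)+qp}$, so the bookkeeping of exponents closes. I would present this first for finitely supported sequences and pass to the limit by the closed graph theorem, as is standard.

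For the sharpness part (the claimed equality), after the easy reverse inclusion I would remark that optimality of the exponent $r$ is witnessed by the case already highlighted in the introduction: taking $k=n$, $X_i=\ell_2$ (cotype $2$), $Y=\mathbb{K}$, $q=2$, $p=1$ gives $r=\tfrac{2}{n}$, and the identification $\mathcal{L}(\ell_2,\dots,\ell_2;\mathbb{K})=\Pi^{n}_{(2/n;1,\dots,1)}$ together with $\Pi^{n}_{(2;2,\dots,2)}\ne\mathcal{L}$ shows that no smaller first exponent can work in general; more generally a diagonal multilinear form on $\ell_{p}$-type spaces provides the extremal example. I expect the main obstacle to be the combinatorial control of the block partition: one must choose the block sizes so that both the geometric growth makes the final Hölder sum over $m$ converge \emph{and} the per-block cotype constants do not accumulate — i.e., verifying that the constant $C$ in the conclusion is finite and independent of the sequences. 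Carefully tracking that the cotype constant enters only to a fixed power (depending on $k$ but not on the block index) is the delicate point, and it is handled exactly as in \cite{michels}, which I would cite for the precise form of the block estimate.
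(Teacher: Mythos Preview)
Your block-partition mechanism has a genuine gap and is not what the paper does. The per-block estimate you invoke, $\bigl\|(x_j)_{j\in A}\bigr\|_{q,w}\le C\,|A|^{1/q-1/p}\bigl\|(x_j)\bigr\|_{p,w}$, is not a consequence of cotype in the form you state: since $p\le q$ one already has $\|(x_j)_{j\in A}\|_{q,w}\le\|(x_j)\|_{p,w}$ trivially, and the extra factor $|A|^{1/q-1/p}\le1$ is a genuine \emph{decay} claim that could only be obtained after rearranging the sequence. But any such rearrangement is coordinate-dependent --- the permutation needed for $(x^{(1)}_j)$ is in general different from the one for $(x^{(2)}_j)$ --- and using different permutations in different slots destroys the diagonal $T(x^{(1)}_j,\dots,x^{(n)}_j)$ you must control. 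The references \cite{junek,michels,pop} do not contain a block argument of this kind; they use the factorization described next.

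The paper's proof is a single application of H\"older based on the sequence \emph{factorization} $\ell_p^{w}(X_i)=\ell_{qp/(q-p)}\cdot\ell_q^{w}(X_i)$, which is how cotype actually enters: under the stated hypotheses every $(x^{(i)}_j)\in\ell_p^{w}(X_i)$ can be written as $x^{(i)}_j=\alpha^{(i)}_j y^{(i)}_j$ with scalars $(\alpha^{(i)}_j)_j\in\ell_{qp/(q-p)}$ and $(y^{(i)}_j)_j\in\ell_q^{w}(X_i)$. Multilinearity pulls the $k$ scalar sequences out of $T$, and H\"older (with exponents giving $\tfrac1r=\tfrac1z+k\cdot\tfrac{q-p}{qp}$, followed by a second H\"older to separate the $\alpha^{(i)}$) yields the bound directly --- no blocks, no dyadic partition, no limiting argument. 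One further correction: your ``elementary fact'' that $\Pi^n_{(a;b_1,\dots,b_n)}\subseteqq\Pi^n_{(a';b'_1,\dots,b'_n)}$ whenever $a\le a'$ and $b_i\le b'_i$ is false as stated (already in the linear case $\Pi_{(1;1)}\not\subset\Pi_{(1;2)}$); the multilinear inclusion theorem requires in addition the compatibility $\tfrac1a-\tfrac1{a'}=\sum_i\bigl(\tfrac1{b_i}-\tfrac1{b'_i}\bigr)$ (or the appropriate inequality), which here holds with equality by the very definition of $r$, so the conclusion survives but your justification does not.
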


\begin{proof}
Since $X_{i}$ has finite cotype $s_{i}\geq2,i=1,...,k$, then we have%
\[
\ell_{p}^{w}(X_{i})=\ell_{qp/\left(  q-p\right)  }\ell_{q}^{w}(X_{i})
\]

for all $i=1,...,k$ with
\[
1\leq p\leq q<\min_{1\leq i\leq k}s_{i}^{\ast}\text{ if }s_{i}>2\text{ for
some }i=1,...,k
\]
or
\[
1\leq p\leq q\leq2\text{ if }s_{i}=2\text{ for all }i=1,...,k.
\]

Let $(x_{j}^{(i)})_{j=1}^{\infty}\in\ell_{p}^{w}(X_{i}),i=1,...,k$ and
$(x_{j}^{(i)})_{j=1}^{\infty}\in\ell_{t}^{w}(X_{i})$ for $i=k+1,...,n$. So
$x_{j}^{(i)}=\alpha_{j}^{\left(  i\right)  }y_{j}^{\left(  i\right)  },$ with
$\left(  \alpha_{j}^{\left(  i\right)  }\right)  _{j=1}^{\infty}\in
\ell_{qp/\left(  q-p\right)  }$ and $\left(  y_{j}^{\left(  i\right)
}\right)  _{j=1}^{\infty}\in\ell_{q}^{w}\left(  X_{i}\right)  ,$ for all $j$
and $i=1,...,k$. If $A\in\Pi_{(z;q...,q,t,...,t)}^{n}(X_{1},...,X_{n};Y)$,
then%
\begin{align*}
&  \left(  \sum_{j=1}^{\infty}\left\Vert A\left(  x_{j}^{(1)},...,x_{j}%
^{\left(  n\right)  }\right)  \right\Vert ^{\frac{zqp}{zk\left(  q-p\right)
+qp}}\right)  ^{\frac{zk\left(  q-p\right)  +qp}{zqp}}\\
&  =\left(  \sum_{j=1}^{\infty}\left(  \left\vert \alpha_{j}^{\left(
1\right)  }\cdots\alpha_{j}^{\left(  k\right)  }\right\vert \left\Vert
A\left(  y_{j}^{(1)},...,y_{j}^{\left(  k\right)  },x_{j}^{(k+1)}%
,...,x_{j}^{\left(  n\right)  }\right)  \right\Vert \right)  ^{\frac
{zqp}{zk\left(  q-p\right)  +qp}}\right)  ^{\frac{zk\left(  q-p\right)
+qp}{zqp}}\\
&  \leq\left(  \sum_{j=1}^{\infty}\left\Vert A\left(  y_{j}^{(1)}%
,...,y_{j}^{\left(  k\right)  },x_{j}^{(k+1)},...,x_{j}^{\left(  n\right)
}\right)  \right\Vert ^{z}\right)  ^{\frac{1}{z}}\left(  \sum_{j=1}^{\infty
}\left\vert \alpha_{j}^{\left(  1\right)  }\cdots\alpha_{j}^{\left(  k\right)
}\right\vert ^{\frac{qp}{k\left(  q-p\right)  }}\right)  ^{k\left(  \frac
{q-p}{qp}\right)  }\\
&  \leq\left(  \sum_{j=1}^{\infty}\left\Vert A\left(  y_{j}^{(1)}%
,...,y_{j}^{\left(  k\right)  },x_{j}^{(k+1)},...,x_{j}^{\left(  n\right)
}\right)  \right\Vert ^{z}\right)  ^{\frac{1}{z}}%
{\displaystyle\prod\limits_{i=1}^{k}}
\left(  \sum_{j=1}^{\infty}\left\vert \alpha_{j}^{\left(  i\right)
}\right\vert ^{\frac{qp}{\left(  q-p\right)  }}\right)  ^{\frac{q-p}{qp}%
}<\infty
\end{align*}
and we conclude that%
\[
\Pi_{(z;q,...,q,t,...,t)}^{n}(X_{1},...,X_{n};Y)\subseteqq\Pi_{(\frac
{zqp}{zk\left(  q-p\right)  +qp};p,...,p,t,...,t)}^{n}(X_{1},...,X_{n};Y).
\]
The other inclusion is a consequence of the inclusion theorem for absolutely
summing multilinear operators.
\end{proof}

A similar result holds if $X_{j_{1}},...,X_{j_{k}},\left\{  j_{1}%
,...,j_{k}\right\}  \subseteqq\left\{  1,...,n\right\}  $ (instead of
$X_{1},...,X_{k}$) have cotype $s_{j_{i}}\geq2,i=1,...,k.$

The following immediate corollary is an optimal (in the sense that we have an
equality instead of an inclusion) generalization of Theorem \ref{tt}:

\begin{corollary}
\label{ut}If $n\geq2$ and $X_{1},...,X_{n}$ have finite cotype $s$ and
\[
1\leq p\leq q<s^{\ast}\text{ if }s>2
\]
or%
\[
1\leq p\leq q\leq2\text{ if }s=2,
\]
then%
\[
\Pi_{(q;q,...,q)}^{n}(X_{1},...,X_{n};Y)=\Pi_{(\frac{qp}{n\left(  q-p\right)
+p};p,...,p)}^{n}(X_{1},...,X_{n};Y)
\]
for every Banach space $Y$ and%
\[
\alpha\leq\frac{qp}{n\left(  q-p\right)  +p}.
\]

\end{corollary}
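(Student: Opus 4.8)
The plan is to obtain Corollary~\ref{ut} as the special case $k=n$, $z=q$ of the theorem just proved, so the ``proof'' is really a matter of unwinding the notation. First I would set $s_{1}=\cdots=s_{n}=s$ and check that the hypotheses line up: the condition ``$1\le p\le q<\min_{1\le i\le k}s_{i}^{\ast}$ if $s_{i}>2$ for some $i$, or $1\le p\le q\le 2$ if $s_{i}=2$ for all $i$'' becomes exactly ``$1\le p\le q<s^{\ast}$ if $s>2$, or $1\le p\le q\le 2$ if $s=2$'', which is the hypothesis of the corollary. Since $k=n$ there are no spaces $X_{k+1},\ldots,X_{n}$ and the exponent $t$ never enters, so I would simply invoke the ``In particular'' clause of the theorem verbatim.

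Next I would perform the exponent simplification. The theorem gives the summability exponent $\dfrac{zqp}{zk(q-p)+qp}$; substituting $z=q$ and $k=n$ yields $\dfrac{q^{2}p}{qn(q-p)+q^{2}p}=\dfrac{q^{2}p}{q\bigl(n(q-p)+p\bigr)}=\dfrac{qp}{n(q-p)+p}$. Hence $\Pi_{(q;q,\ldots,q)}^{n}(X_{1},\ldots,X_{n};Y)=\Pi_{(\frac{qp}{n(q-p)+p};p,\ldots,p)}^{n}(X_{1},\ldots,X_{n};Y)$ for every Banach space $Y$, which is the first assertion of the corollary.

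For the bound on $\alpha$ I would argue as follows. The equality just recorded shows in particular that the inclusion~(\ref{ii}) of Problem~\ref{xz} holds with $\alpha=\frac{qp}{n(q-p)+p}$; therefore the optimal (i.e.\ smallest admissible) value of $\alpha$ in (\ref{ii}) satisfies $\alpha\le\frac{qp}{n(q-p)+p}$. The sense in which this value is optimal --- which is all that is claimed --- is that for it (\ref{ii}) is an \emph{equality} of operator classes, so no exponent strictly smaller than $\frac{qp}{n(q-p)+p}$ can describe the class $\Pi_{(q;q,\ldots,q)}^{n}(X_{1},\ldots,X_{n};Y)$.

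I do not expect any genuine obstacle: the only points needing care are the algebraic simplification of the exponent and the observation that, when $k=n$, the auxiliary parameter $t$ and the trailing spaces $X_{k+1},\ldots,X_{n}$ drop out harmlessly --- both of which the ``In particular'' formulation of the theorem already takes care of.
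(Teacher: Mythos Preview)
Your proposal is correct and matches the paper's approach exactly: the paper states this as an ``immediate corollary'' with no proof, the intended argument being precisely the specialization $k=n$, $z=q$ of the preceding theorem that you carry out. (There is a harmless slip in your intermediate step --- after substitution the denominator is $qn(q-p)+qp$, not $qn(q-p)+q^{2}p$ --- but your final exponent $\frac{qp}{n(q-p)+p}$ is correct.)
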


\begin{remark}
The above results were independently proved in \cite{Bla}.
\end{remark}

\section{An estimate for $g_{n}$}

From Corollary \ref{ut} we know that
\[
\Pi_{(2;2,...,2)}^{n}\left(  \ell_{1},...,\ell_{1};\ell_{2}\right)
=\Pi_{(\frac{2}{n+1};1,...,1)}^{n}\left(  \ell_{1},...,\ell_{1};\ell
_{2}\right)
\]
for all $n\geq2$. But, since%
\[
\mathcal{L}\left(  \ell_{1},...,\ell_{1};\ell_{2}\right)  =\Pi_{m(2;2,...,2)}%
^{n}\left(  \ell_{1},...,\ell_{1};\ell_{2}\right)  \subseteqq\Pi
_{(2;2,...,2)}^{n}\left(  \ell_{1},...,\ell_{1};\ell_{2}\right)
\]
it readily follows that
\[
\Pi_{(\frac{2}{n+1};1,...,1)}^{n}\left(  \ell_{1},...,\ell_{1};\ell
_{2}\right)  =\mathcal{L}\left(  \ell_{1},...,\ell_{1};\ell_{2}\right)
\]
for all $n\geq2$. So we have:

\begin{theorem}
If $n\geq2$, then%
\[
g_{n}\leq\frac{2}{n+1}.
\]

\end{theorem}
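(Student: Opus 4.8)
The plan is to chain together the two coincidence results that have just been established, using the fact that multiple summing operators are in particular absolutely summing. First I would invoke Corollary \ref{ut} in the specific case $X_{1}=\cdots=X_{n}=\ell_{1}$, which has cotype $s=2$, with the parameter choices $q=2$ and $p=1$ (legitimate since $1\leq p=1\leq q=2\leq 2$). Substituting into the formula $\frac{qp}{n(q-p)+p}$ gives $\frac{2\cdot 1}{n(2-1)+1}=\frac{2}{n+1}$, so Corollary \ref{ut} yields the identity
\[
\Pi_{(2;2,\dots,2)}^{n}(\ell_{1},\dots,\ell_{1};\ell_{2})=\Pi_{(\frac{2}{n+1};1,\dots,1)}^{n}(\ell_{1},\dots,\ell_{1};\ell_{2}).
\]

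Next I would bring in P\'erez-Garc\'ia's multilinear Grothendieck theorem in the form $\mathcal{L}(\ell_{1},\dots,\ell_{1};\ell_{2})=\Pi_{m(p;p,\dots,p)}^{n}(\ell_{1},\dots,\ell_{1};\ell_{2})$ for $1\leq p\leq 2$, taking $p=2$ so that every continuous $n$-linear operator $\ell_{1}\times\cdots\times\ell_{1}\to\ell_{2}$ is multiple $(2;2,\dots,2)$-summing. Combined with the trivial inclusion $\Pi_{m(2;2,\dots,2)}^{n}\subseteqq\Pi_{(2;2,\dots,2)}^{n}$, this gives $\mathcal{L}(\ell_{1},\dots,\ell_{1};\ell_{2})\subseteqq\Pi_{(2;2,\dots,2)}^{n}(\ell_{1},\dots,\ell_{1};\ell_{2})$; the reverse inclusion is automatic since absolutely summing operators are in particular continuous and linear-in-each-variable. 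Hence $\mathcal{L}(\ell_{1},\dots,\ell_{1};\ell_{2})=\Pi_{(2;2,\dots,2)}^{n}(\ell_{1},\dots,\ell_{1};\ell_{2})$, and stringing this together with the displayed identity from the previous paragraph gives
\[
\Pi_{(\frac{2}{n+1};1,\dots,1)}^{n}(\ell_{1},\dots,\ell_{1};\ell_{2})=\mathcal{L}(\ell_{1},\dots,\ell_{1};\ell_{2}),
\]
which, by the very definition of $g_{n}$ as the best such constant in Problem \ref{xc}, forces $g_{n}\leq\frac{2}{n+1}$.

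Honestly, there is no serious obstacle here: the entire argument is a concatenation of equalities and one trivial inclusion, and all the analytic content has already been absorbed into Corollary \ref{ut} (whose proof is the factorization-of-weakly-$p$-summable-sequences argument using cotype) and into the cited multilinear Grothendieck theorem. The only points requiring a moment's care are checking that the hypotheses of Corollary \ref{ut} are met for $\ell_{1}$ with $q=2,p=1$ — in particular that $\ell_{1}$ genuinely has cotype $2$ and that the boundary case $q=2$ is permitted — and confirming that one is allowed to take $p=2$ in P\'erez-Garc\'ia's result, which is exactly the endpoint $1\leq p\leq 2$ stated in the Introduction. Once these are in place the conclusion $g_{n}\leq\frac{2}{n+1}$ is immediate.
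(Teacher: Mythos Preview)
Your proof is correct and follows exactly the same route as the paper: apply Corollary~\ref{ut} with $X_i=\ell_1$, $q=2$, $p=1$ to identify $\Pi_{(2;2,\dots,2)}^{n}$ with $\Pi_{(\frac{2}{n+1};1,\dots,1)}^{n}$, then use P\'erez-Garc\'{\i}a's result $\mathcal{L}(\ell_1,\dots,\ell_1;\ell_2)=\Pi_{m(2;2,\dots,2)}^{n}$ together with the trivial inclusion $\Pi_{m(2;2,\dots,2)}^{n}\subseteqq\Pi_{(2;2,\dots,2)}^{n}$. Your write-up is in fact slightly more detailed than the paper's, explicitly noting the reverse inclusion and the verification of the hypotheses.
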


Note that Grothendieck's Theorem asserts that $g_{1}=1$ and $1=\frac{2}{1+1};$
hence we conjecture that $\frac{2}{n+1}$ is in fact the optimal estimate for
$g_{n}$.

\bigskip

\begin{acknowledgement}
This paper is a part of the doctoral thesis of the author which is being
written under supervision of Prof. Daniel Pellegrino. The author thanks Prof.
Pellegrino for introducing him to the subject and the main problem from this
note and also for several suggestions and important insights.
\end{acknowledgement}

\bigskip

\end{document}